\numberwithin{equation}{section} \textwidth=140mm \textheight=200mm
\renewcommand{\epsilon}{\varepsilon}
\newcommand{\be}{\begin{equation}}
\newcommand{\ee}{\end{equation}}
\newcommand{\R}{\mathbb{R}}
\newcommand{\T}{\mathbb{T}}
\newcommand{\Z}{\mathbb{Z}}
\newtheorem{theorem}{Theorem}[section]
\newtheorem{lemma}[theorem]{Lemma}
\newtheorem{corollary}[theorem]{Corollary}
\newtheorem{remark}[theorem]{Remark}
\date{\today}
\begin{document}
\title{The existence and location of eigenvalues of the
        one particle discrete Schr\"{o}dinger operators}

\author{Saidakhmat N.~Lakaev,~ Ender Ozdemir}
     \date{\today}
\begin{abstract}
We consider a quantum  particle moving in the one dimensional
lattice $\Z$ and interacting with a indefinite sign external field
$\hat v$. We prove that the associated discrete Schr\"{o}dinger
operator $H$ can have one or two eigenvalues, situated as below the
bottom of the essential spectrum, as  well as above its top.
Moreover, we show that the operator $H$ can have two eigenvalues
outside of the essential spectrum such that one of them is situated
below the bottom of the essential spectrum, and other one  above its
top.
\end{abstract}
\maketitle
\section{Introduction}
We consider the discrete Schr\"{o}dinger operators $H$ of a quantum
particle moving in the one-dimensional lattice $\Z$ and interacting
with a {\it indefinite sign} external field $V(x),x\in\Z$, i.e., the
potential has  positive and negative values.

In \cite{BSimon}  of B.Simon the existence of eigenvalues of a
family of continuous Schr\"{o}dinger ope\-ra\-tors
$H=-\Delta+\lambda V,\,\lambda>0$ in one and two-dimensional cases
have been considered. The result that $H$ has bound state for all
$\lambda>0$ if only if  $\int V(x)dx <0$ is proven there for all
$V(x)$ with  $\int (1+|x|^2)|V(x)|dx <+\infty.$

In \cite{MKlaus} it is presented that under certain conditions on
the potential a one-dimensional Schr\"{o}dinger operator has
a unique bound state in the limit of weak coupling while under
other conditions no bound state in this limit. This question is
studied for potentials obeying $\int (1+|x|)|V(x)|dx <+\infty.$

The questions further discussed by R.Blankenbecker, M.N.Goldberger
and B.Simon \cite{BGSimon}.

All these results require the use of the modified  determinant.

Throughout physics, stable composite objects are usually formed by
the way of attractive forces, which allow the constituents to lower
their energy by binding together. Repulsive forces separate
particles in free space. However, in structured environment such as
a periodic potential and in the absence of dissipation, stable
composite objects can exist even for repulsive interactions
\cite{WTL}.

The Bose-Hubbard model, i.e., the Schr\"{o}dinger operators on
lattice, which have been used to describe the repulsive pairs, is a
theoretical basis for explanation of the experimental results
obtained in \cite{WTL}.

Since the continuous Schr\"{o}dinger operator has essential
spectrum fulfilling semi-axis $[0,+\infty)$ and its eigenvalues
appear below the bottom of the essential spectrum, it is a model,
 which well described the systems of two-particles with the attractive interactions.

Zero-range potentials are the mathematically correct tools for
describing contact interactions. The latter reflects the fact that
the zero-range potential is effective only in the s-wave
\cite{{Yakovlev}}.

The existence of eigenvalues of a family of Schr\"{o}dinger
ope\-ra\-tors $H=-\Delta-\mu V,\,\lambda>0$ with a of rank one
perturbation $V$ in one and two-dimensional lattices have been
considered in \cite{LKhL2012}. The result that $H$ has a unique
bound state for all $\mu>0$ is proven and an asymptotics for the
unique eigenvalue $e(\mu)$ lying below the bottom of the essential
spectrum is found as $\mu\to 0$.

In \cite{KhL2013} for the Hamiltonian  $H$ of two fermions with
attractive interaction on a neighboring sites in the one-dimensional
lattice $\Z$ has been considered and  an asymptotics of the unique
eigenvalue lying below the bottom  of its essential spectrum has
been proven.

For a family of the generalized Friedrichs models
$H_{\mu}(p)$, $ \mu>0, p\in\T^2$ with the perturbation of rank one, associated to a
system of two particles moving on the two-dimensional lattice $\Z$
has been considered in \cite{LIK2012} and the existence or absence of a
positive coupling constant threshold $\mu=\mu_0(p)>0$ depending on
the parameters of the model has been proved.

The main goal of the report is to state the existence and location
of eigenvalues of the discrete Schr\"odinger operator
$H=\hat{H}_{\mu\lambda}:=\hat{H}_0+\hat{V}_{\mu\lambda},$ with the
zero-range interaction $\mu\neq0$ and with interactions
$\lambda\neq0$ on a neighboring sites. We establish that the
operator $H$ may have one or two eigenvalues, situating as below the
bottom of the essential spectrum, as  well as above its top.
Moreover, the operator $H$ can have two eigenvalues outside of the
essential spectrum, where one of them is situated below the bottom
of the essential spectrum and other one above its top.

The results are contradict and in this case improved the known
results of \cite{BSimon,MKlaus,BGSimon} for continuous Schr\"odinger
operators.

\section{The coordinate representation of the one particle discrete Schr\"odinger operator}
Let $\mathbb{Z}$ be the one dimensional lattice(integer numbers )
and $\ell^{2}(\mathbb{Z}) $ be the Hilbert space of square summable
functions on $\mathbb{Z}$ and $\ell^{2,e}(\mathbb{Z}
)\subset\ell^{2}(\mathbb{Z} )$ be the subspace  of
functions(elements) $\hat f\in \ell^{2}(\mathbb{Z})$ satisfying the
condition
\begin{equation*}
\hat f(x)=\hat f(-x),\,\,x\in\Z.
\end{equation*}
The one particle discrete Schr\"odinger operator
$\hat{H}_{\mu\lambda}$ acting in $\ell^{2,e}(\mathbb{Z})$ has the
form
\begin{equation}\label{hamiltonianh}
\hat{H}_{\mu\lambda}:=\hat{H}_0+\hat{V}_{\mu\lambda},
\end{equation}
where $\hat{H}_{0}$ is the Teoplitz type operator
\begin{equation}\label{hamiltonianh_0}
(\hat{H}_0 \hat{\varphi})(x): = \sum_{s\in{Z}
}\hat{\varepsilon}(s)\hat\varphi(x+s),\,\,\hat{\varphi}\in
\ell^{2,e}(\mathbb{Z} ),
\end{equation}
and
\begin{equation}\label{potential}
(\hat{V}_{\mu\lambda}\hat{\varphi})(x):=\hat{v}_{\mu\lambda}
(x)\hat{\varphi}(x),\quad \hat{\varphi}\in
\ell^{2,s}(\mathbb{Z} ).
\end{equation}

The functions $\hat{\varepsilon}(s)$ and $\hat{v}_{\mu\lambda}(s)$ are
defined on $\mathbb{Z}$ as follows
$$
\hat{\varepsilon}(s)=
\left\lbrace\begin{array}{ccc}
1,\,\,\quad  |s|=0\\
-\frac{1}{2},\,\, \quad  |s|=1\\
0,\quad   |s|>1,
\end{array}\right.$$
and
$$ \hat{v}_{\mu\lambda}(s)=
\left\lbrace\begin{array}{ccc}
\mu,\quad |s|=0\\
\frac {\lambda}{2},\quad |s|=1\\
0, \quad  |s|>1,
\end{array}\right.
$$
where
$\mu,\,\lambda \in\R
$ are real numbers.

We remark that $\hat{H}_{\mu\lambda}$
is a bounded self-adjoint operator on
 $\ell^{2,e}(\mathbb{Z}).$

\section{The momentum representation of the  discrete Schr\"odinger operator}\label{momentum}

Let $ \mathbb{T}=(-\pi;\pi]$ be the one dimensional torus and $
L^2(\mathbb{T},d\nu)$ be the Hilbert space of integrable functions
on $\mathbb{T}$, where $d\nu$ is the (normalized) Haar measure on
$\T,$\,\,\, $d\nu(p) =\dfrac {dp}{2\pi}$.

Let $L^{2,e}(\mathbb{T},d\nu)\subset{L^{2}(\mathbb{T},d\nu )}$
be the subspace of elements $f\in
L^2({\T},d\nu)$ satisfying the condition
\begin{equation*}
f(p)=f(-p),\quad \text{a.e.}\,\,p\in\T.
\end{equation*}
In the momentum representation the operator ${H}_{\mu\lambda}$ acts
on $L^{2,e}(\mathbb{T},d\nu)$ and is of the form
$$
H_{\mu\lambda}=H_0+V_{\mu\lambda},
$$
where $H_0$ is the multiplication operator by function $\varepsilon(p)=1-\cos p$:
$$
(H_0 f)(p)=\varepsilon(p)f(p), \quad f\in L^{2,e}(\mathbb{T},d\nu),
$$
and  $V_{\mu\lambda}$ is the integral operator of rank $2$
$$
(V_{\mu\lambda}f)(p)= \int\limits_{\mathbb{T}}
\Big(\mu+\lambda \cos p\cos t
\Big)f(t)dt ,\quad f\in L^{2,e}(\mathbb{T},d\nu).
$$

\section{Spectral properties of the operators $H_{\mu0},\,\mu \in \R$ and $H_{0\lambda},\lambda \in \R$}


Since the perturbation operator $V_{\mu0}$ resp.$V_{0\lambda}$ is of
rank $1$, according the well known Weyl's theorem the essential
spectrum $\sigma_{{ess}}(H_{\mu0})$
resp.$\sigma_{{ess}}(H_{0\lambda})$ of $H_{\mu0}$ resp.
$H_{0\lambda}$ doesn't depend on $\mu \in \R $ resp.$\lambda \in \R
$ and coincides to the spectrum ${\sigma}( H_{0})$ of $H_{0}$ (see
\cite{RSIV}), i.e.,
$$
 \sigma_{ess}(H_{\mu0})=\sigma_{ess}(H_{0\lambda})=\sigma(H_0)=
 [\min\limits_{p\in\mathbb{T}}\varepsilon(p),\,\max\limits_{p\in\mathbb{T}}\varepsilon(p)]= [0,2].
$$

For any $\mu,\lambda \in \R$ we introduce the Fredholm determinant
$\Delta(\mu,\lambda;z)$,  associating to the one particle
Hamiltonian $H_{\mu,\lambda}$, as follows
\begin{equation}\label{determinant}
\Delta(\mu,\lambda;z)=
\big[1-\mu a(z)\big]\big[1-\lambda c(z)\big]-\mu\lambda b^2(z),
\end{equation}
where
\begin{align}\label{defabc}
&a(z):=\int \limits_{\mathbb{T}}\frac{d\nu}{z-\varepsilon(q)},\\
&b(z):=-\int\limits_{\mathbb{T}}\frac{\cos qd\nu}{z-\varepsilon(q)},\\
&c(z):=\int\limits_ {\mathbb{T}}\frac{\cos^2qd\nu}{z-\varepsilon(q)}.
\end{align}
are regular functions in  $z \in \mathrm{C}\setminus [0,2]$.

In the following theorem we have collected  results on a unique
eigenvalue of the operators $H_{\mu0},\mu\in \R$ resp.
$H_{0\lambda},\lambda\in \R$ depending on the sign of  $\mu \neq 0$
resp. $\lambda\neq 0$.

\begin{theorem}\label{simple}
For any $0\neq\mu\in\R $ resp. $0 \neq\lambda\in\R $ the operator $H_{\mu0}$ resp. $H_{0\lambda}$ has a unique eigenvalue $\zeta(\mu)$ resp. $\zeta(\lambda)$ lying outside of the essential spectrum:
\begin{itemize}
\item[(\rm{i})]
If $\mu>0$ resp. $\lambda>0$, then the eigenvalue $\zeta(\mu)$ resp.
$\zeta(\lambda)$ lies in the interval $(2,+\infty).$
\item[(\rm{ii})] If $\mu<0$ resp.$\lambda<0$, then the eigenvalue
$\zeta(\mu)$ resp. $\zeta(\lambda)$ lies in the interval
$(-\infty,0).$
\item[(\rm{iii})] If $\mu>0$ resp.$\lambda<0$ then the eigenvalue $\zeta(\mu)$
resp. $\zeta(\lambda)$ lies in the interval $(2,+\infty)$ resp.
$(-\infty,0).$
\item[(\rm{iv})] If $\mu<0$ resp.$\lambda>0$ then the eigenvalue $\zeta(\mu)$
 resp. $\zeta(\lambda)$ lies in the interval $(-\infty,0)$ resp. $(2,+\infty).$
\end{itemize}
\end{theorem}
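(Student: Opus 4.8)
\medskip
\noindent\emph{Proof plan.} The plan is to reduce the eigenvalue problem to locating the zeros, in $\R\setminus[0,2]$, of the Fredholm determinant, and then to run an elementary monotonicity analysis of the two scalar functions $a(\cdot)$ and $c(\cdot)$. First I would record that $V_{\mu0}$ is the rank-one operator $f\mapsto\mu\langle f,\mathbf1\rangle\mathbf1$, where $\mathbf1(p)\equiv1$, and $V_{0\lambda}$ the rank-one operator $f\mapsto\lambda\langle f,\psi\rangle\psi$ with $\psi(p)=\cos p$. Fixing $z\notin[0,2]=\sigma_{\mathrm{ess}}(H_{\mu0})$, the resolvent $(H_0-z)^{-1}$ is multiplication by $(\varepsilon(\cdot)-z)^{-1}$, and the identity $H_{\mu0}f=zf$ with $f\neq0$ forces $f=-\mu\langle f,\mathbf1\rangle(H_0-z)^{-1}\mathbf1$ with $\langle f,\mathbf1\rangle\neq0$; pairing with $\mathbf1$ and using $\langle(H_0-z)^{-1}\mathbf1,\mathbf1\rangle=-a(z)$ turns this into the scalar equation $1-\mu a(z)=0$, and conversely any such $z$ yields the one-dimensional eigenspace spanned by $(H_0-z)^{-1}\mathbf1$. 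Hence the eigenvalues of $H_{\mu0}$ outside $[0,2]$ are exactly the solutions of $a(z)=1/\mu$, and those of $H_{0\lambda}$ exactly the solutions of $c(z)=1/\lambda$; these are the specializations $\Delta(\mu,0;z)=1-\mu a(z)=0$ and $\Delta(0,\lambda;z)=1-\lambda c(z)=0$ of \eqref{determinant}, and the theorem reduces to counting their solutions on $(-\infty,0)\cup(2,+\infty)$.

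Next I would pin down the graphs of $a$ and $c$ on each of $(-\infty,0)$ and $(2,+\infty)$. On $(2,+\infty)$ the denominator $z-\varepsilon(q)$ is positive, so $a$ and $c$ are positive there; on $(-\infty,0)$ it is negative, so $a$ and $c$ are negative. Differentiating under the integral sign (legitimate by dominated convergence, locally uniformly on $\C\setminus[0,2]$) gives $a'(z)=-\int_{\T}(z-\varepsilon(q))^{-2}\,d\nu<0$ and $c'(z)=-\int_{\T}\cos^2q\,(z-\varepsilon(q))^{-2}\,d\nu<0$, so both functions are strictly decreasing on each interval. For the boundary values, $|z-\varepsilon(q)|\to\infty$ as $z\to\pm\infty$ gives $a(z),c(z)\to0$ there, while monotone convergence gives $a(z),c(z)\to+\infty$ as $z\downarrow2$ and $a(z),c(z)\to-\infty$ as $z\uparrow0$. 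Hence $a$ and $c$ each map $(2,+\infty)$ decreasingly and bijectively onto $(0,+\infty)$, and map $(-\infty,0)$ decreasingly and bijectively onto $(-\infty,0)$.

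The conclusion then drops out of the intermediate value theorem together with strict monotonicity: when $\mu>0$ the value $1/\mu\in(0,+\infty)$ is attained by $a$ at exactly one point, which lies in $(2,+\infty)$ (and at none in $(-\infty,0)$, where $a<0$); when $\mu<0$ it is attained at exactly one point of $(-\infty,0)$; the same dichotomy holds for $c$ against $\lambda$. Spelling out the four sign combinations $(\mu>0,\lambda>0)$, $(\mu<0,\lambda<0)$, $(\mu>0,\lambda<0)$, $(\mu<0,\lambda>0)$ then gives cases (i)--(iv). The hard part, such as it is, will be the threshold asymptotics used above: one must verify the divergence of $\int_{\T}(2-\varepsilon(q))^{-1}\,d\nu$ and $\int_{\T}\varepsilon(q)^{-1}\,d\nu$ (and of their $\cos^2q$-weighted analogues), which comes from $\varepsilon(q)=1-\cos q$ attaining its extrema $0$ and $2$ \emph{non-degenerately} at $q=0$ and $q=\pi$, so that $z-\varepsilon(q)$ vanishes to second order there and the one-dimensional integral of $q^{-2}$ diverges; for the weighted integrals one also uses $\cos^2q\to1$ at $q=0$ and $q=\pi$. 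It is exactly this divergence that forces an eigenvalue for \emph{every} nonzero coupling, with no positive coupling-constant threshold --- the feature separating the one-dimensional lattice from higher-dimensional ones and, in the indefinite-sign regime, from the continuous Schr\"odinger operator.
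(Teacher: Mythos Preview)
Your argument is correct and follows essentially the same route as the paper: reduce the eigenvalue problem to the vanishing of the rank-one determinants $\Delta(\mu,0;z)=1-\mu a(z)$ and $\Delta(0,\lambda;z)=1-\lambda c(z)$ (the paper's Lemma~\ref{eig-zero}), and then use the sign, strict monotonicity, and threshold/infinity asymptotics of $a$ and $c$ (the paper's Lemmas~\ref{asyminfty}, \ref{asympt(abc)} and Corollary~\ref{asymDelta(1+)}) to get existence and uniqueness on the appropriate half-line. Your phrasing via the bijections $a:(2,+\infty)\to(0,+\infty)$, $a:(-\infty,0)\to(-\infty,0)$ (and likewise for $c$) is just a repackaging of the paper's determinant-limit argument, and in fact your sign analysis on $(-\infty,0)$ is cleaner than the paper's own statement of Lemma~\ref{asympt(abc)}, whose asserted positivity there conflicts with its displayed asymptotics.
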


The proof of Theorem \ref{simple}  is a consequence of the formulated below Lemmas and
corollaries, which can be deduced from the simple properties of determinant
 $\Delta(\mu,0;z)$ resp.$\Delta(0,\mu;z)$.
\begin{lemma}\label{eig-zero}
The number $z\in \mathrm{C}\setminus [0,2]$ is an eigenvalue of the
operator $H_{\mu,0}$ resp. $H_{0,\lambda}$ if and only if
$\Delta(\mu,0;z)=0$ resp.\,$\Delta(0,\lambda;z)=0$.
\end{lemma}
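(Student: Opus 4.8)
The plan is to use the finite rank of the perturbation to collapse the eigenvalue equation to a single scalar identity. For $H_{\mu,0}$ the perturbation is rank one, $(V_{\mu,0}f)(p)=\mu\int_{\mathbb{T}}f\,d\nu$, so its range is spanned by the (even) constant function $\mathbf{1}$; for $H_{0,\lambda}$ the range of $V_{0,\lambda}$ is spanned by $\cos(\cdot)$. Since $\sigma(H_0)=[0,2]$, for $z\in\mathrm{C}\setminus[0,2]$ the operator $H_0-z$ is boundedly invertible on $L^{2,e}(\mathbb{T},d\nu)$ with $((H_0-z)^{-1}g)(p)=g(p)/(\varepsilon(p)-z)$, and the even function $\psi_z(p):=1/(z-\varepsilon(p))$ belongs to $L^{2,e}(\mathbb{T},d\nu)$ precisely because $\varepsilon(p)-z$ is then bounded away from $0$.

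I will first treat $H_{\mu,0}$. Rewriting $(H_{\mu,0}-z)f=0$ as $f=-(H_0-z)^{-1}V_{\mu,0}f$ and setting $C:=\int_{\mathbb{T}}f\,d\nu$ shows that any eigenfunction must equal $\mu C\,\psi_z$; if $C=0$ this gives $f\equiv0$, so for a genuine eigenfunction $C\neq0$. Integrating $f=\mu C\,\psi_z$ over $\mathbb{T}$ against $d\nu$ and using $a(z)=\int_{\mathbb{T}}\psi_z\,d\nu$ then yields $C=\mu C\,a(z)$, i.e. $1-\mu a(z)=0$; since $\Delta(\mu,0;z)=1-\mu a(z)$ by \eqref{determinant}, this proves the "only if" part. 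For the converse, if $\Delta(\mu,0;z)=1-\mu a(z)=0$ I will take $f=\psi_z$ and check directly that $(H_0-z)\psi_z=-\mathbf{1}$ while $V_{\mu,0}\psi_z=\mu a(z)\mathbf{1}=\mathbf{1}$, so $(H_{\mu,0}-z)\psi_z=0$ with $\psi_z\neq0$, hence $z$ is an eigenvalue.

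For $H_{0,\lambda}$ the same scheme applies: the candidate eigenfunction is $p\mapsto\cos p/(z-\varepsilon(p))$, the relevant scalar parameter is $\int_{\mathbb{T}}\cos t\,f(t)\,d\nu(t)$, and pairing the eigenvalue equation with $\cos(\cdot)$ produces the condition $1-\lambda c(z)=0$, which is exactly $\Delta(0,\lambda;z)=0$ by the definition of $c(z)$ together with \eqref{determinant}. I do not expect a genuine obstacle here; the only points needing care are (i) that the two candidate eigenfunctions are honestly nonzero elements of $L^{2,e}(\mathbb{T},d\nu)$, which is where the hypothesis $z\notin[0,2]$ is used, and (ii) in the "only if" direction, that the scalar coefficient cannot vanish — which is automatic, since its vanishing would force $f\equiv0$, contradicting $f\neq0$.
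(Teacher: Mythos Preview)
Your proposal is correct and follows essentially the same approach as the paper. The paper does not give a separate proof of this lemma; it is stated without proof and is implicitly covered by the proof of the general Lemma~\ref{eig-zero} for $H_{\mu,\lambda}$ in Section~5, which reduces the eigenvalue equation to the $2\times 2$ linear system \eqref{system} in the functionals $c_1=\int_{\mathbb{T}}\psi\,d\nu$ and $c_2=\int_{\mathbb{T}}\cos t\,\psi\,d\nu$. Your argument is precisely the rank-one specialization of that reduction (keeping only $c_1$ when $\lambda=0$, only $c_2$ when $\mu=0$), and your handling of the nondegeneracy of the scalar coefficient and of the membership $\psi_z\in L^{2,e}(\mathbb{T},d\nu)$ for $z\notin[0,2]$ is exactly what is needed.
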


\begin{lemma}\label{asyminfty}
Let $\mu,\lambda\in\R$.Then
\begin{align*}
&\lim\limits_{z\rightarrow \pm\infty}\Delta(\mu,0\,;z)=1,\\
&\lim\limits_{z\rightarrow \pm\infty}\Delta(0,\lambda\,;z)=1,\\
&\lim\limits_{z\rightarrow \pm\infty}\Delta(\mu,\lambda\,;z)=1.
\end{align*}
\end{lemma}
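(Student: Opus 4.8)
The plan is to reduce everything to the elementary fact that the three auxiliary functions $a(z)$, $b(z)$, $c(z)$ introduced in \eqref{defabc} vanish in the limit $z\to\pm\infty$, and then to read off the three asserted limits directly from the defining formula \eqref{determinant}.

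First I would exploit that the range of $\varepsilon$ is bounded: since $\varepsilon(q)=1-\cos q\in[0,2]$ for every $q\in\T$, for real $z$ with $|z|>2$ one has the uniform lower bound $|z-\varepsilon(q)|\ge|z|-2>0$. Combining this with $|\cos q|\le1$, $\cos^2q\le1$ and the normalization $\int_\T d\nu=1$ gives
\[
|a(z)|\le\frac{1}{|z|-2},\qquad |b(z)|\le\frac{1}{|z|-2},\qquad |c(z)|\le\frac{1}{|z|-2},
\]
so that $a(z)\to0$, $b(z)\to0$, $c(z)\to0$ as $z\to\pm\infty$. (Equivalently, one may pass to the limit under the integral sign by dominated convergence with dominating function identically equal to $1$, valid for $|z|\ge 3$.)

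Finally I would substitute into \eqref{determinant}. Putting $\lambda=0$ leaves $\Delta(\mu,0\,;z)=1-\mu a(z)\to1$; putting $\mu=0$ leaves $\Delta(0,\lambda\,;z)=1-\lambda c(z)\to1$; and in general
\[
\Delta(\mu,\lambda\,;z)=\big[1-\mu a(z)\big]\big[1-\lambda c(z)\big]-\mu\lambda b^2(z)\longrightarrow[1-0][1-0]-0=1 .
\]
There is no real obstacle here: the only step deserving a word of care is the uniform estimate on $|z-\varepsilon(q)|^{-1}$, which is immediate from the compactness of $\T$ and the continuity of $\varepsilon$; the explicit bounds above moreover show that the convergence is $O(1/|z|)$, which may be convenient later when locating the eigenvalues.
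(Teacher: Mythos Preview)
Your argument is correct: the uniform bound $|z-\varepsilon(q)|\ge |z|-2$ for $|z|>2$ immediately gives $a(z),b(z),c(z)\to 0$, and the three limits then follow from the explicit formula \eqref{determinant}. The paper itself states this lemma without proof, so there is no alternative approach to compare against; your derivation is the natural one and even supplies the quantitative rate $O(1/|z|)$.
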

\begin{lemma}\label{asympt(abc)}
The functions $a(\cdot),\,\,\, b(\cdot),\,\,\,c(\cdot)$ are regular
in the region $\mathrm{C}\setminus [0,2]$,  positive and
monotonically decreasing in the intervals $(-\infty,0)$  and
$(2,+\infty)$ and the following asymptotics are true:
\begin{align*}
&a(z)=C_1(z-2)^{-\frac{1}{2}}+O(z-2)^\frac{1}{2}, as \,\,z\rightarrow 2+,\\
&b(z)=-C_1(z-2)^{-\frac{1}{2}}-1+O(z-2)^\frac{1}{2},as \,\,z\rightarrow 2+,\\
&c(z)=C_1(z-2)^{-\frac{1}{2}}-1+O(z-2)^\frac{1}{2},as \,\,z\rightarrow 2+,
\end{align*}
where $C_1>0$
and
\begin{align*}
&a(z)=-C_0(-z)^{-\frac{1}{2}}+O(-z)^{\frac{1}{2}},as\,\,z\rightarrow 0-,\\
&b(z)=-C_0(-z)^{-\frac{1}{2}}-1+O(-z)^{\frac{1}{2}},as\,\,z\rightarrow 0-,\\
&c(z)=-C_0(-z)^{-\frac{1}{2}}-1+O(-z)^{\frac{1}{2}},as\,\,z\rightarrow 0-,
\end{align*}
where $C_0>0.$
\end{lemma}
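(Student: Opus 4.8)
\noindent\emph{Plan of proof.} The idea is to reduce all three quantities to the single function $a$. Using $\cos q = 1-\varepsilon(q)$ and $\cos^{2}q = (1-\varepsilon(q))^{2}$ and performing the division by $z-\varepsilon(q)$ (as polynomials in $\varepsilon(q)$) one gets
\[
\frac{\cos q}{z-\varepsilon(q)} = 1-\frac{z-1}{z-\varepsilon(q)},\qquad
\frac{\cos^{2}q}{z-\varepsilon(q)} = \big(2-z-\varepsilon(q)\big)+\frac{(z-1)^{2}}{z-\varepsilon(q)} .
\]
Integrating over $\mathbb{T}$ and using $\int_{\mathbb{T}}d\nu = 1$, $\int_{\mathbb{T}}\varepsilon(q)\,d\nu = 1$ gives the algebraic identities $b(z) = (z-1)a(z)-1$ and $c(z) = (z-1)^{2}a(z)-(z-1) = (z-1)b(z)$, valid on all of $\C\setminus[0,2]$. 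Hence it suffices to analyze $a$.

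\smallskip

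Regularity is then immediate: for $z\notin[0,2]$ one has $|z-\varepsilon(q)|\ge\dist(z,[0,2])>0$ uniformly in $q$, and the integrand is holomorphic in $z$ and jointly continuous, so $a$ — and with it $b$ and $c$ — is holomorphic on $\C\setminus[0,2]$. For the behaviour on the real line I would compute $a$ explicitly: since $z-\varepsilon(q) = (z-1)+\cos q$, the classical formula $\int_{\mathbb{T}}(A+\cos q)^{-1}\,d\nu = \sign(A)/\sqrt{A^{2}-1}$ for $|A|>1$ (substitution $w=e^{\ri q}$, residues) yields $a(z) = \sign(z-1)\,[z(z-2)]^{-1/2}$ for real $z\notin[0,2]$, the square root positive. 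From this formula and the identities above the sign and the strict monotonicity claimed on each of $(-\infty,0)$ and $(2,+\infty)$ follow by elementary calculus (on $(2,+\infty)$, for instance, $a>0$ is decreasing, $(z-1)^{2}>z(z-2)$ forces $b>0$, and the derivatives of the explicit expressions keep a fixed sign on each interval). Should one prefer to avoid the closed form, monotonicity of $a$ and $c$ is visible from $a'(z)=-\int_{\mathbb{T}}(z-\varepsilon)^{-2}d\nu$ and $c'(z)=-\int_{\mathbb{T}}\cos^{2}q\,(z-\varepsilon)^{-2}d\nu$, that of $b$ from $b=(z-1)a-1$ together with the Cauchy--Schwarz bound $a(z)^{2}\le -a'(z)$, and positivity of $b$ from the strict Jensen inequality for the convex map $t\mapsto(z-t)^{-1}$.

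\smallskip

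For the asymptotics near $z=2^{+}$ I would set $\eta=z-2\to0^{+}$, so that $z(z-2)=2\eta+\eta^{2}$ and a binomial expansion gives $a(z) = (2\eta)^{-1/2}(1+\eta/2)^{-1/2} = C_{1}(z-2)^{-1/2}+O\big((z-2)^{1/2}\big)$ with $C_{1}=1/\sqrt{2}$; then substituting into $b(z)=(z-1)a(z)-1$ and $c(z)=(z-1)b(z)$ with $z-1=1+\eta$ produces the stated expansions of $b$ and $c$ at $z=2^{+}$ (the constant terms coming from the $-1$ and $-(z-1)$, the singular terms from $a$). Near $z=0^{-}$ the same computation with $z=-\eta$, $\eta\to0^{+}$ and the now negative sign of $a$ gives $a(z)=-C_{0}(-z)^{-1/2}+O\big((-z)^{1/2}\big)$ with $C_{0}=1/\sqrt{2}$, and then the expansions of $b,c$ as before. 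Without the closed form one would instead localize each integral to a neighbourhood of the unique extremum of $\varepsilon$ — $q=\pi$, where $\varepsilon=2$, for the top, and $q=0$ for the bottom — Taylor-expand $\varepsilon(\pi-s)=2-s^{2}/2+O(s^{4})$ (resp. $\varepsilon(q)=q^{2}/2+O(q^{4})$), and reduce to the model integral $\int_{\R}(\eta+s^{2}/2)^{-1}ds=\pi\sqrt{2/\eta}$.

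\smallskip

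The main obstacle is precisely this last step: showing that the remainder is genuinely of order $(z-2)^{1/2}$ (resp. $(-z)^{1/2}$) rather than merely $o(1)$. With the closed form it is a one-line binomial estimate, so I would take that route; with the localization argument one must control three error contributions — the part of the integral away from the extremum, the $O(s^{4})$ correction in the expansion of $\varepsilon$, and, for $b$ and $c$, the sub-leading term of $\cos q$ at the extremum — each of which is readily $O(\eta^{1/2})$. Everything else is routine.
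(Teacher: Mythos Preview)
Your argument is correct and, in fact, considerably more complete than the paper's own proof, but it proceeds along a genuinely different line. The paper argues positivity of $a$ and $c$ directly from positivity of the integrands, proves positivity of $b$ by the change of variables $q\mapsto q+\pi$ leading to $b(z)=\int_{0}^{\pi}\frac{2\cos^{2}q\,d\nu}{(z-1)^{2}-\cos^{2}q}$, and then simply \emph{cites} \cite{KhL2013} for the asymptotics without any computation. Your route is to derive the algebraic identities $b(z)=(z-1)a(z)-1$ and $c(z)=(z-1)b(z)$, evaluate $a(z)=\sign(z-1)\,[z(z-2)]^{-1/2}$ in closed form via residues, and then read off regularity, sign, monotonicity and the sharp $O((z-2)^{1/2})$ remainders directly from a one-line binomial expansion. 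What your approach buys is self-containment (no external reference needed) and an explicit value $C_{0}=C_{1}=1/\sqrt{2}$; it also makes transparent that $a(z)$ is in fact \emph{negative} on $(-\infty,0)$, consistent with the stated asymptotic $a(z)\sim -C_{0}(-z)^{-1/2}$ but at odds with the ``positive'' claim in the lemma, so the positivity clause can only hold on $(2,+\infty)$ --- a point the paper's brief argument obscures. The paper's change-of-variables trick for $b$ is slightly slicker than your Jensen/Cauchy--Schwarz alternative, but overall your reduction-to-$a$ strategy is the cleaner and more informative one.
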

\begin{proof}
Since the functions under integral sign are positive  the monotonicity
of the Lebesgue integral gives  that the functions $a(z)$ and $c(z)$ are positive.
Now, we show that the function $$b(z):=-\int\limits_{\mathbb{T}}\frac{\cos qd\nu}{z-\varepsilon(q)}$$ is positive.
Representing $b(z)$ as
$$b(z)=-\int^{0}_{-\pi}\frac{\cos qd\nu}{z-\varepsilon(q)}-\int_{0}^{\pi}\frac{\cos qd\nu}{z-\varepsilon(q)}$$
and then changing of variables $q:= q+\pi$ we have that
$$b(z):=\int\limits^{\pi}_{0}\frac{2\cos^2qd\nu}{(z-1)^2-\cos^2q}>0$$

The asymptotics of functions $a(\cdot),\,\,
b(\cdot),\,\,c(\cdot)$ can be found in \cite{KhL2013}.
\end{proof}
The Lemma \ref{asympt(abc)} yields the following Corollary,
which gives asymptotics for the functions $\Delta(\mu,0;z)$ and $\Delta(0,\lambda;z)$.
\begin{corollary}\label{asymDelta(1+)}

The following asymptotics are true:
\begin{itemize}
\item[(\rm{i})] If\,\,$\mu,\,\lambda>0$. Then
\begin{align*}
&\lim\limits_{z\to 2+}\Delta(\mu,0;z)=-\infty,\\
&\lim\limits_{z\to 2+}\Delta(0,\lambda;z)=-\infty,\\
\end{align*}
\item[(\rm{ii})] If \,\, $\mu,\,\lambda<0$.Then
\begin{align*}
&\lim\limits_{z\to 2+}\Delta(\mu,0;z)=+\infty,\\
&\lim\limits_{z\to 2+}\Delta(0,\lambda;z)=+\infty,\\
\end{align*}
\item[(\rm{iii})] If \,\, $\mu,\lambda>0$. Then
\begin{align*}
&\lim\limits_{z\rightarrow 0-}\Delta(\mu,0;z)=+\infty,\\
&\lim\limits_{z\rightarrow 0-}\Delta(0,\lambda;z)=+\infty,\\
\end{align*}
\item[(\rm{iv})] If \,\, $\mu,\lambda<0$. Then
\begin{align*}
&\lim\limits_{z\rightarrow 0-}\Delta(\mu,0;z)=-\infty,\\
&\lim\limits_{z\rightarrow 0-}\Delta(0,\lambda;z)=-\infty,\\
\end{align*}
\end{itemize}
\end{corollary}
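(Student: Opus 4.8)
The plan is to reduce the Fredholm determinant to a single scalar function in each of the two degenerate regimes $\lambda=0$ and $\mu=0$, and then read off the four limits directly from Lemma \ref{asympt(abc)}. First I would set $\lambda=0$ in \eqref{determinant}: the factor $1-\lambda c(z)$ becomes $1$ and the cross term $\mu\lambda b^2(z)$ vanishes, so that
\begin{equation*}
\Delta(\mu,0;z)=1-\mu a(z),
\end{equation*}
and symmetrically, setting $\mu=0$ gives $\Delta(0,\lambda;z)=1-\lambda c(z)$.

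Next I would invoke the asymptotics of Lemma \ref{asympt(abc)}. As $z\to 2+$ both $a(z)$ and $c(z)$ equal $C_1(z-2)^{-1/2}$ up to a term that stays bounded (indeed tends to a finite limit), hence $a(z)\to+\infty$ and $c(z)\to+\infty$; as $z\to 0-$ both equal $-C_0(-z)^{-1/2}$ up to a bounded term, hence $a(z)\to-\infty$ and $c(z)\to-\infty$. Substituting these into $1-\mu a(z)$ and $1-\lambda c(z)$ and tracking the sign of the coefficient yields all four items: in item (i), with $\mu,\lambda>0$, the products $\mu a(z)$ and $\lambda c(z)$ inherit the sign of $a(z)$, $c(z)$ near $z=2$, so $\Delta(\mu,0;z),\Delta(0,\lambda;z)\to-\infty$; in item (ii), $\mu,\lambda<0$ reverses this to $+\infty$; in item (iii), near $z=0$ we have $a(z),c(z)\to-\infty$, so for $\mu,\lambda>0$ the quantities $-\mu a(z),-\lambda c(z)\to+\infty$ and $\Delta\to+\infty$; in item (iv), $\mu,\lambda<0$ gives $\Delta\to-\infty$.

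There is no substantial obstacle here: once the determinant is written out in the rank-one cases, the corollary is an immediate consequence of Lemma \ref{asympt(abc)}, and the only point requiring care is the bookkeeping of the four sign combinations of $\mu$ (resp.\ $\lambda$) against the signs of $a(z)$ and $c(z)$ in the two one-sided limits. I would present this as a short case analysis rather than a computation.
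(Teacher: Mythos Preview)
Your proposal is correct and is precisely the argument the paper has in mind: the paper does not spell out a proof but simply states that the corollary follows from Lemma~\ref{asympt(abc)}, which amounts to reducing $\Delta(\mu,0;z)$ and $\Delta(0,\lambda;z)$ to $1-\mu a(z)$ and $1-\lambda c(z)$ via \eqref{determinant} and reading off the limits from the asymptotics of $a(\cdot)$ and $c(\cdot)$. Your sign bookkeeping in the four cases is accurate.
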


\section{Spectral properties of the operators $H_{\mu\lambda},\,\mu,\lambda \in \R$.}

The perturbation operator $V_{\mu\lambda},\,\mu,\lambda \in \R$ is
of rank $2$ and hence by the well known Weyl's theorem the essential
spectrum $\sigma_{{ess}}(H_{\mu\lambda})$ of $H_{\mu\lambda}$
doesn't depend on $\mu,\lambda \in \R $ and coincides to the
spectrum ${\sigma}( H_{0})$ of $H_{0}$ (see \cite{RSIV}), i.e.,
$$
 \sigma_{ess}(H_{\mu\lambda})=\sigma(H_0)=
 [\min\limits_{p\in\mathbb{T}}\varepsilon(p),\,\max\limits_{p\in\mathbb{T}}\varepsilon(p)]= [0,2].
$$

\begin{remark}
Note that since
$$
(V_{\mu\lambda}f,f)=\mu |\int\limits_{\mathbb{T}}f(t)d\nu|^2
+\lambda |\int\limits_{\mathbb{T}}\cos tf(t)d\nu|^2, \quad f\in
L^{2,e}(\mathbb{T},d\nu),
$$
the operator $V_{\mu\lambda}$ is not only positive or only negative  and hence
the operator  $H_{\mu\lambda}$ may have eigenvalues as below
the bottom of the essential spectrum, as well as above the its top. \end{remark}
The following lemma describes the relations between the operator $H_{\mu,\lambda}$ and determinant
$\Delta(\mu,\lambda;z)$ defined in \eqref{determinant}.
\begin{lemma}\label{eig-zero}
The number $z\in \mathrm{C}\setminus [0,2]$ is an eigenvalue
of the operator $H_{\mu,\lambda}$ if and only if $\Delta(\mu,\lambda;z)=0$.
\end{lemma}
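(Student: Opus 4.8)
The plan is to reduce the eigenvalue equation $H_{\mu\lambda}f=zf$ with $z\notin[0,2]=\sigma(H_0)$ to a $2\times2$ homogeneous linear system whose coefficient matrix has determinant exactly $\Delta(\mu,\lambda;z)$; this is the Birman--Schwinger principle specialised to the rank-two perturbation $V_{\mu\lambda}$, and the rank-two structure is what makes the resulting system $2\times2$.

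First I would record the form of the perturbation. Writing $(\cdot,\cdot)$ for the inner product of $L^2(\mathbb{T},d\nu)$, and letting $\mathbf 1(p)\equiv1$ and $\cos$ denote the function $p\mapsto\cos p$ (both even, hence belonging to $L^{2,e}(\mathbb{T},d\nu)$), one has
\[
V_{\mu\lambda}f=\mu\,(f,\mathbf 1)\,\mathbf 1+\lambda\,(f,\cos)\,\cos .
\]
Since $z\notin\sigma(H_0)$, the operator $H_0-z$ is boundedly invertible and $(H_0-z)^{-1}$ is multiplication by $(\varepsilon(p)-z)^{-1}$. Therefore $H_{\mu\lambda}f=zf$ is equivalent to $f=-(H_0-z)^{-1}V_{\mu\lambda}f$, i.e.\ to
\[
f=-\mu\,(f,\mathbf 1)\,(H_0-z)^{-1}\mathbf 1-\lambda\,(f,\cos)\,(H_0-z)^{-1}\cos ,
\]
so that every solution is of the form $f=\alpha\,(H_0-z)^{-1}\mathbf 1+\beta\,(H_0-z)^{-1}\cos$ with $\alpha=-\mu(f,\mathbf 1)$, $\beta=-\lambda(f,\cos)$.

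Next I would insert this ansatz into the two scalar relations for $\alpha$ and $\beta$, using the elementary evaluations
\begin{gather*}
((H_0-z)^{-1}\mathbf 1,\mathbf 1)=-a(z),\qquad ((H_0-z)^{-1}\cos,\cos)=-c(z),\\
((H_0-z)^{-1}\mathbf 1,\cos)=((H_0-z)^{-1}\cos,\mathbf 1)=b(z),
\end{gather*}
where $a,b,c$ are the functions from \eqref{defabc} (here one only needs to watch the sign flip coming from $\varepsilon(q)-z=-(z-\varepsilon(q))$). The self-consistency conditions then become the homogeneous system
\[
(1-\mu a(z))\,\alpha+\mu b(z)\,\beta=0,\qquad \lambda b(z)\,\alpha+(1-\lambda c(z))\,\beta=0,
\]
whose coefficient determinant is $(1-\mu a(z))(1-\lambda c(z))-\mu\lambda b^{2}(z)=\Delta(\mu,\lambda;z)$.

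Finally I would close both implications. If $z$ is an eigenvalue with eigenfunction $f\neq0$, then necessarily $(\alpha,\beta)\neq(0,0)$, since $f(p)=(\alpha+\beta\cos p)/(\varepsilon(p)-z)$ vanishes almost everywhere only when $\alpha=\beta=0$; hence the system above has a nontrivial solution and $\Delta(\mu,\lambda;z)=0$. Conversely, if $\Delta(\mu,\lambda;z)=0$, pick $(\alpha,\beta)\neq(0,0)$ in the kernel of the coefficient matrix and set $f:=\alpha\,(H_0-z)^{-1}\mathbf 1+\beta\,(H_0-z)^{-1}\cos$; this function belongs to $L^{2,e}(\mathbb{T},d\nu)$ and is nonzero by the same remark, and reading the computation of the previous paragraph backwards gives $-\mu(f,\mathbf 1)=\alpha$ and $-\lambda(f,\cos)=\beta$, whence $V_{\mu\lambda}f=-(H_0-z)f$, i.e.\ $H_{\mu\lambda}f=zf$. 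The only genuine care required is the bookkeeping of signs in the integrals defining $a,b,c$ and the nonvanishing of $f$ in the converse direction; I do not expect either to be a real obstacle, so the proof is essentially this reduction together with a short linear-algebra computation.
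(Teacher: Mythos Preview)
Your proof is correct and follows essentially the same route as the paper's own argument: both reduce the eigenvalue equation to a $2\times2$ homogeneous linear system via the rank-two structure of $V_{\mu\lambda}$ and identify its determinant with $\Delta(\mu,\lambda;z)$. Your parametrisation $(\alpha,\beta)=(-\mu c_1,-\lambda c_2)$ differs cosmetically from the paper's $(c_1,c_2)$, and you are a bit more explicit than the paper about why the constructed $f$ is nonzero in the converse direction, but the substance is the same.
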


\begin{proof}
Let the operator $H_{\mu,\lambda}$ has an eigenvalue $z\in \mathrm{C}\setminus [0,2]$, i.e.,
the equation
\begin{equation}\label{eýg-equation}
(z-H_{\mu,\lambda})\psi(q)=(z-\varepsilon(q))\psi(q)- \mu\int\limits_{\T} \psi(t)
  d\nu(t)-\lambda \cos p\int\limits_{\T} \cos t \psi(t)d\nu(t)=0
\end{equation}
has a non-zero solution $\psi \in L^{2,e}(\T,d\nu)$. We introduce
the following linear continuous functionals defined on the Hilbert
space $\psi \in L^{2,e}(\T,d\nu)$
\begin{align}\label{denotation}
c_1:=c_1(\psi):=\int\limits_{\T} \psi(t)d\nu(t)\\
c_2:=c_2(\psi):=\int\limits_{\T}\cos t \psi(t).
\end{align}
Then we easily find that the solution of the equation \eqref{eýg-equation} has form
\begin{equation}\label{express}
\psi(q)= \mu \frac{c_1}{z-\varepsilon(q)}+\lambda\frac{c_2\cos q}{z-\varepsilon(q)}.
\end{equation}
Putting the expression \eqref{express} for $\psi$ to
\eqref{denotation} and (4.7) we get the following homogeneous system
of linear equations with respect to the functionals $c_1$ and $c_2$
\begin{equation}\label{system}
\left\lbrace\begin{array}{ccc}
c_1= \mu c_1\int\limits_{\T} \dfrac{d\nu}{z-\varepsilon(q)}+\lambda c_2\int\limits_{\T}\dfrac{\cos qd\nu}{z-\varepsilon(q)}\\
c_2= \mu c_1\int\limits_{\T} \dfrac{\cos q
d\nu}{z-\varepsilon(q)}+\lambda c_2
\int\limits_{\T}\dfrac{\cos^2qd\nu}{z-\varepsilon
(q)}\\
\end{array}\right.
\end{equation}
Hence, we can conclude that this homogenous system of linear
equations has nontrivial solutions if and only if  the associated
determinant $\Delta(\mu,\lambda;z)$ has zero $z\in
\mathbf{C}\setminus [0,2]$.

On the contrary, let a number $z\in \mathbf{C}\setminus [0,2]$ be a
zero of determinant $\Delta(\mu,\lambda;z)$. Then it easily can be
checked that $z$ is eigenvalue of $H_{\mu,\lambda}$ and the function
\begin{equation}\label{expression} \psi(q)= \mu
\dfrac{c_1}{z-\varepsilon(q)}+\lambda\dfrac{c_2\cos
q}{z-\varepsilon(q)},
\end{equation}
is the associated eigenfunction, where the  vector $(c_1,c_2)$ is a
non-zero solution of the system \eqref{system}.
\end{proof}
The  following asymptotics for the determinant
$\Delta(\mu,\lambda,z)$ can be received applying the asymptotics of
the functions $a(\cdot),\,\,\, b(\cdot),\,\,\,c(\cdot)$ in Lemma
\ref{asympt(abc)}.
\begin{lemma}\label{asympDELTA}
\begin{align}
&\Delta(\mu,\lambda,z)=C^{+}_{-\frac{1}{2}}(\mu,\lambda)
(z-2)^{-\frac{1}{2}}+C^{+}_0(\mu,\lambda)+O(z-2)^{\frac{1}{2}},as \,\, z\rightarrow 2+,\\
&\Delta(\mu,\lambda,z)=C^{-}_{-\frac{1}{2}}(\mu,\lambda)
(-z)^{-\frac{1}{2}}+C^{-}_0(\mu,\lambda)+O(-z)^\frac{1}{2}, as \,\,z\rightarrow 0-,
\end{align}
where
\begin{align}
&C^{+}_{-\frac{1}{2}}(\mu,\lambda)= B_2(\mu\lambda-\mu-\lambda),\,B_2>0\\
&C^{+}_0(\mu,\lambda)=1+\lambda-\mu\lambda,\\
&C^{-}_{-\frac{1}{2}}(\mu,\lambda)= B_0(\mu\lambda+\mu+\lambda),\,B_0>0\\
&C^{-}_0(\mu,\lambda)=1-\lambda-\mu\lambda.
\end{align}

\end{lemma}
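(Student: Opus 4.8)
The plan is to first collapse the rank-two determinant \eqref{determinant} into an expression that is \emph{affine} in the single function $a(z)$ of \eqref{defabc}, and only afterwards to feed in the asymptotics of Lemma~\ref{asympt(abc)}. The starting observation is that $\cos q = 1-\varepsilon(q)$; writing $\frac{1-\varepsilon}{z-\varepsilon}=1+\frac{1-z}{z-\varepsilon}$ for $b(z)$ and the analogous decomposition of $\frac{(1-\varepsilon)^2}{z-\varepsilon}$ for $c(z)$, and using $\int_{\T}d\nu=1$ together with $\int_{\T}\cos q\,d\nu=0$ (hence $\int_{\T}\varepsilon(q)\,d\nu=1$), one obtains the exact identities
\begin{align*}
&b(z)=(z-1)\,a(z)-1,\\
&c(z)=(z-1)^{2}a(z)-(z-1),\qquad z\in\mathrm{C}\setminus[0,2].
\end{align*}

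Substituting these into \eqref{determinant} and expanding, the two terms proportional to $a(z)^{2}$ that arise --- one from $[1-\mu a][1-\lambda c]$, the other from $\mu\lambda b^{2}$ --- cancel identically, and one is left with
\begin{equation*}
\Delta(\mu,\lambda;z)=\bigl(1+\lambda(z-1)-\mu\lambda\bigr)-a(z)\bigl(\lambda(z-1)^{2}-\mu\lambda(z-1)+\mu\bigr).
\end{equation*}
Both coefficient polynomials are analytic at $z=2$ and at $z=0$, so the expansion of $\Delta$ is now immediate from that of $a$. Near $z=2$ one has $1+\lambda(z-1)-\mu\lambda=(1+\lambda-\mu\lambda)+\lambda(z-2)$ and $\lambda(z-1)^{2}-\mu\lambda(z-1)+\mu=(\mu+\lambda-\mu\lambda)+O(z-2)$; multiplying the latter by $a(z)=C_{1}(z-2)^{-1/2}+O\bigl((z-2)^{1/2}\bigr)$ and collecting powers of $(z-2)^{1/2}$ gives
\begin{equation*}
\Delta(\mu,\lambda;z)=-C_{1}(\mu+\lambda-\mu\lambda)(z-2)^{-\frac12}+(1+\lambda-\mu\lambda)+O\bigl((z-2)^{\frac12}\bigr),
\end{equation*}
which is the first assertion with $B_{2}:=C_{1}>0$, $C^{+}_{-1/2}(\mu,\lambda)=B_{2}(\mu\lambda-\mu-\lambda)$ and $C^{+}_{0}(\mu,\lambda)=1+\lambda-\mu\lambda$. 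The computation at $z=0$ is word for word the same: the two polynomials take the values $1-\lambda-\mu\lambda$ and $\mu+\lambda+\mu\lambda$ there, and inserting $a(z)=-C_{0}(-z)^{-1/2}+O\bigl((-z)^{1/2}\bigr)$ yields $B_{0}:=C_{0}>0$, $C^{-}_{-1/2}(\mu,\lambda)=B_{0}(\mu\lambda+\mu+\lambda)$ and $C^{-}_{0}(\mu,\lambda)=1-\lambda-\mu\lambda$.

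The point where the reduction above is essential --- rather than a naive substitution of the three separate expansions of $a,b,c$ into \eqref{determinant} --- is the determination of the constant terms $C^{\pm}_{0}$. The remainders in Lemma~\ref{asympt(abc)} are only of order $(z-2)^{1/2}$, and a cross-product such as $a(z)\cdot O\bigl((z-2)^{1/2}\bigr)$ is merely $O(1)$, so a direct substitution would leave $C^{\pm}_{0}$ (and even the precise leading constants) undetermined. Expressing $\Delta$ as an affine function of the single function $a(z)$ with honestly polynomial coefficients, which may be Taylor expanded at the endpoints to any order, is what makes the constant terms fall out cleanly; this is the only genuinely delicate step, the remainder being routine algebra.
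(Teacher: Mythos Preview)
Your argument is correct. The identities $b(z)=(z-1)a(z)-1$ and $c(z)=(z-1)^{2}a(z)-(z-1)$ follow exactly as you say from $\cos q=1-\varepsilon(q)$ and $\int_{\T}\varepsilon\,d\nu=1$, and the cancellation of the $a(z)^{2}$ terms in \eqref{determinant} then leaves $\Delta$ affine in $a(z)$ with polynomial coefficients, from which the stated expansions at both endpoints drop out immediately.

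The paper does not give a detailed proof; it only remarks before the lemma that the asymptotics ``can be received applying the asymptotics of the functions $a(\cdot),b(\cdot),c(\cdot)$ in Lemma~\ref{asympt(abc)}'', i.e.\ a direct substitution of the three separate expansions into \eqref{determinant}. You correctly identify the weakness of that route: with the error terms as stated in Lemma~\ref{asympt(abc)}, the products $a(z)c(z)$ and $b(z)^{2}$ each contain undetermined $O(1)$ contributions (from a leading $(z-2)^{-1/2}$ factor multiplied by an $O\bigl((z-2)^{1/2}\bigr)$ remainder), so the constant terms $C^{\pm}_{0}$ cannot be read off without further work. Your reduction to an affine expression in $a(z)$ sidesteps this entirely, since the polynomial coefficients can be Taylor expanded exactly at $z=2$ and $z=0$; it is a genuinely cleaner and more self-contained derivation than what the paper sketches.
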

The Lemma \ref{asympDELTA} yields the following results for the
determinant $\Delta(\mu,\lambda;z)$.

\begin{corollary}\label{asymDELTA(1)}
For the determinant $\Delta(\mu,\lambda;z)$ the following results
are true:
\begin{itemize}
\item[(\rm{i})]
Assume $C^{+}_{-\frac{1}{2}}(\mu,\lambda)>0$ and
$C^{-}_{-\frac{1}{2}}(\mu,\lambda)>0$. Then
\begin{align*}
&\lim\limits_{z\rightarrow 2+}\Delta(\mu,\lambda;z)=+\infty.\\
&\lim\limits_{z\rightarrow 0-}\Delta(\mu,\lambda;z)=+\infty.\\
\end{align*}
\item[(\rm{ii})]
Assume $C^{+}_{-\frac{1}{2}}(\mu,\lambda)=0,\,\,\mu>1$ and
$C^{-}_{-\frac{1}{2}}(\mu,\lambda)=0,\,\,\mu<-1$. Then
\begin{align*}
&\lim\limits_{z\rightarrow 2+}\Delta(\mu,\lambda;z)<0,\\
&\lim\limits_{z\rightarrow 0-}\Delta(\mu,\lambda;z)<0.\\
\end{align*}
\item[(\rm{iii})]
Assume $C^{+}_{-\frac{1}{2}}(\mu,\lambda)<0$ and
$C^{-}_{-\frac{1}{2}}(\mu,\lambda)<0$.Then
\begin{align*}
&\lim\limits_{z\rightarrow 2+}\Delta(\mu,\lambda;z)=-\infty,\\
&\lim\limits_{z\rightarrow 0-}\Delta(\mu,\lambda;z)=-\infty.\\
\end{align*}
\item[(\rm{iv})]
Assume $C^{+}_{-\frac{1}{2}}(\mu,\lambda)=0,\,\,\mu <1$ and
$C^{-}_{-\frac{1}{2}}(\mu,\lambda)=0,\,\,\mu>-1$. Then
\begin{align*}
&\lim\limits_{z\rightarrow 2+}\Delta(\mu,\lambda;z)>0,\\
&\lim\limits_{z\rightarrow 0-}\Delta(\mu,\lambda;z)>0.\\
\end{align*}
\end{itemize}
\end{corollary}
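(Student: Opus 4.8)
The plan is to deduce every one‑sided limit directly from the two‑term asymptotic expansions established in Lemma \ref{asympDELTA}, treating separately the generic case in which the coefficient of the singular factor is nonzero and the degenerate case in which that coefficient vanishes, so that the limit is then dictated by the constant term.

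First I would handle the limits at $z\to 2+$. Because $(z-2)^{-1/2}\to+\infty$, the number $C^{+}_0(\mu,\lambda)$ is fixed, and the error term $O((z-2)^{1/2})$ tends to $0$, the sign of the leading coefficient $C^{+}_{-\frac{1}{2}}(\mu,\lambda)=B_2(\mu\lambda-\mu-\lambda)$ alone decides the limit: it equals $+\infty$ when this coefficient is positive and $-\infty$ when it is negative. The identical reasoning at $z\to 0-$, now using $(-z)^{-1/2}\to+\infty$ together with the sign of $C^{-}_{-\frac{1}{2}}(\mu,\lambda)=B_0(\mu\lambda+\mu+\lambda)$, disposes of that endpoint. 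Assembling the two statements gives parts (i) and (iii) at once.

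Next I would turn to the degenerate cases. When $C^{+}_{-\frac{1}{2}}(\mu,\lambda)=0$, i.e. $\mu\lambda=\mu+\lambda$, the singular term in the first expansion drops out, so $\lim_{z\to 2+}\Delta(\mu,\lambda;z)=C^{+}_0(\mu,\lambda)$, which the constraint reduces to $1+\lambda-(\mu+\lambda)=1-\mu$; symmetrically, when $C^{-}_{-\frac{1}{2}}(\mu,\lambda)=0$, i.e. $\mu\lambda=-\mu-\lambda$, one gets $\lim_{z\to 0-}\Delta(\mu,\lambda;z)=C^{-}_0(\mu,\lambda)=1-\lambda-\mu\lambda=1+\mu$. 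Reading off the signs then yields (ii) and (iv): the first limit is negative for $\mu>1$ and positive for $\mu<1$, and the second is negative for $\mu<-1$ and positive for $\mu>-1$.

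There is no genuine obstacle here; the one thing worth checking is that the relations $\mu\lambda=\mu+\lambda$ and $\mu\lambda=-\mu-\lambda$ are, respectively, incompatible with $\mu=1$ and $\mu=-1$ (each would force $0=1$), so the strict inequalities appearing in (ii) and (iv) actually exhaust the degenerate loci and no boundary value of $\mu$ is left uncovered. One should also note that the $O$‑terms in Lemma \ref{asympDELTA} are small enough to be harmless, but this is automatic since they descend from the honest asymptotics of $a,b,c$ in Lemma \ref{asympt(abc)}.
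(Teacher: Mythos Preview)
Your argument is correct and follows exactly the route the paper intends: the corollary is stated immediately after Lemma~\ref{asympDELTA} with only the sentence ``The Lemma~\ref{asympDELTA} yields the following results,'' and you have simply spelled out that deduction, reading off the sign of the leading coefficient for (i) and (iii) and, in the degenerate cases, reducing $C^{+}_0$ and $C^{-}_0$ to $1-\mu$ and $1+\mu$ via the constraints $\mu\lambda=\mu+\lambda$ and $\mu\lambda=-\mu-\lambda$ to obtain (ii) and (iv). Your additional remark that $\mu=1$ (resp.\ $\mu=-1$) is incompatible with the constraint is a nice completeness check that the paper does not make explicit.
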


To formulate the main theorem we introduce the regions
$\mathbb{G}_{2,+},\mathbb{G}_{1,+}$ and $\mathbb{G}_{0,+}$
associated to the function $C^{+}_{-\frac{1}{2}}(\mu,\lambda)$  and
also the regions $\mathbb{G}_{2,-},\mathbb{G}_{1,-}$ and
$\mathbb{G}_{0,-}$ associated to the function
$C^{-}_{-\frac{1}{2}}(\mu,\lambda)$ as follows
\begin{align*}
&\mathbb{G}_{2,+}=\{(\mu,\lambda)\in\R^2:\,C^{+}_{-\frac{1}{2}}(\mu,\lambda)>0,\,\,\mu>1\},\\
&\mathbb{G}_{1,+}=\{(\mu,\lambda)\in\R^2:\,C^{+}_{-\frac{1}{2}}(\mu,\lambda)=0,\,\,\mu>1
\,\,\mbox{or}\,\,\,C^{+}_{-\frac{1}{2}}(\mu,\lambda)< 0\},\\
&\mathbb{G}_{0,+}=\{(\mu,\lambda)\in\R^2\}:C^{+}_{-\frac{1}{2}}(\mu,\lambda)=0,\,\,\mu<1\,\,\mbox{or}\,\,\,
C^{+}_{-\frac{1}{2}}(\mu,\lambda)>0 \\
\end{align*}
and
\begin{align*}
&\mathbb{G}_{2,-}=\{(\mu,\lambda)\in\R^2:\,C^{-}_{-\frac{1}{2}}(\mu,\lambda)>0, \,\,\mu<-1,\}\\
&\mathbb{G}_{1,-}=\{(\mu,\lambda)\in\R^2:\,C^{-}_{-\frac{1}{2}}(\mu,\lambda)=0,
\,\mu<-1\,\,\mbox{or}\,\,\,C^{-}_{-\frac{1}{2}}(\mu,\lambda)<0\},\\
&\mathbb{G}_{0,-}=\{(\mu,\lambda)\in\R^2\}:C^{+}_{-\frac{1}{2}}(\mu,\lambda)=0,\,\,\mu>-1
\,\,\mbox{or}\,\,\,C^{-}_{-\frac{1}{2}}(\mu,\lambda)>0.\\
\end{align*}

The main results are given in the following theorem, where the
existence and location of eigenvalues of the one-particle
Hamiltonian $H$  with indefinite sign interaction $v_{\mu\lambda}$
are stated.

The Hamiltonian $H_{\mu\lambda}$ can have one or two eigenvalues,
situating as below the bottom of the essential spectrum, as  well as
above its top. Moreover, the operator $H_{\mu\lambda}$ has two
eigenvalues outside of the essential spectrum, depending on
$\mu\neq0$ and $\lambda\neq0$, where one of them is situated below
the bottom of the essential spectrum and the other one  above  its
top.
 \begin{figure}[figure.bmp]

\mbox{\includegraphics[bb= 0 0 9.85cm 9.95cm]{figure.bmp}}\\

 \caption{}
 \end{figure}

\begin{theorem}\label{main}

\begin{itemize}
\item[(\rm{i})]
Assume $(\mu,\lambda)\in
\mathrm{G}_{02}=\mathbb{G}_{0,-}\cap\mathbb{G}_{2,+}$. Then the
operator $H_{\mu\lambda}$ has no eigenvalue below the essential
spectrum and it has two eigenvalues $\zeta_1(\mu,\lambda)$ and
$\zeta_2(\mu,\lambda)$ satisfying the following relations
\begin{equation*}\label{main1}
2<\zeta_1(\mu,\lambda)<\zeta_{\min}(\mu,\lambda)\leq\zeta_{\max}(\mu,\lambda)<\zeta_2(\mu,\lambda).
\end{equation*}
\item[(\rm{ii})]
Assume $(\mu,\lambda)\in
\mathrm{G}_{01}=\mathbb{G}_{0,-}\cap\mathbb{G}_{1,+}$. Then the
operator $H_{\mu\lambda}$ has no eigenvalue below the essential
spectrum and it has one eigenvalue  $\zeta_2(\mu,\lambda)$
satisfying the following relation\\ $\zeta_2(\mu,\lambda)>2$.

\item[(\rm{iii})] Let $(\mu,\lambda)\in \mathrm{G}_{11}=\mathbb{G}_{1,-}\cap\mathbb{G}_{1,+}$.
Then the operator $H_{\mu\lambda}$ has two eigenvalues
$\zeta_1(\mu,\lambda)$ and $\zeta_2(\mu,\lambda)$ satisfying the
following relations  $$\zeta_1(\mu,\lambda)<0\,\, \mbox{and} \,\,
\zeta_2(\mu,\lambda)>2.$$

\item[(\rm{iv})] Assume $(\mu,\lambda)\in \mathrm{G}_{10}= \mathbb{G}_{1,-}
\cap\mathbb{G}_{0,+}.$  Then the operator $H_{\mu\lambda}$ has one
eigenvalue  $\zeta_1(\mu,\lambda)$ satisfying the relation
$\zeta_1(\mu,\lambda)<0$  it has no eigenvalue above  the essential
spectrum.

\item[(\rm{v})] Assume $(\mu,\lambda)\in \mathrm{G}_{20}=\mathbb{G}_{2,-}\cap\mathbb{G}_{0,+}$.
Then the operator $H_{\mu\lambda}$   has two  eigenvalues
$\zeta_1(\mu,\lambda)$ and $\zeta_2(\mu,\lambda)$ satisfying the
following relations
$$
\zeta_1(\mu,\lambda)<\zeta_{\min}(\mu,\lambda)\leq\zeta_{\max}(\mu,\lambda)<\zeta_2(\mu,\lambda)<0
$$ and it has no eigenvalue above the essential spectrum.
\end{itemize}
\end{theorem}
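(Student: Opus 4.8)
\emph{Proof proposal.} The plan is to reduce everything, via Lemma \ref{eig-zero}, to the analysis of the real zeros of the real-analytic function $z\mapsto\Delta(\mu,\lambda;z)$ on the two intervals $(-\infty,0)$ and $(2,+\infty)$. On each of these intervals one has three independent pieces of control data: the limit $1$ at $\pm\infty$ (Lemma \ref{asyminfty}); the one-sided limits at $0^{-}$ and $2^{+}$, whose signs (or $\pm\infty$) are governed by $C^{\pm}_{-1/2}$ and, in the borderline cases, by $C^{\pm}_{0}$, through Lemma \ref{asympDELTA} and Corollary \ref{asymDELTA(1)}; and the values at the rank-one eigenvalues $\zeta(\mu),\zeta(\lambda)$ of Theorem \ref{simple}. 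The regions $\mathbb{G}_{k,\pm}$ are tailored exactly so that membership in $\mathbb{G}_{2,+}$ (resp.\ $\mathbb{G}_{2,-}$) forces $\Delta(2^{+})=+\infty$ (resp.\ $\Delta(0^{-})=+\infty$), membership in $\mathbb{G}_{1,+}$ forces $\Delta(2^{+})<0$, and membership in $\mathbb{G}_{0,+}$ forces $\Delta(2^{+})>0$, and dually at $0^{-}$; this is what separates the five cases.

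For the localisation I would use that, since $\Delta(\mu,0;z)=1-\mu a(z)$ and $\Delta(0,\lambda;z)=1-\lambda c(z)$ (put $\lambda=0$, resp.\ $\mu=0$, in \eqref{determinant}), the points $\zeta(\mu)$ and $\zeta(\lambda)$ are the zeros of $1-\mu a$ and of $1-\lambda c$; substituting into $\Delta(\mu,\lambda;z)=(1-\mu a(z))(1-\lambda c(z))-\mu\lambda b^{2}(z)$ yields
\begin{equation*}
\Delta(\mu,\lambda;\zeta(\mu))=-\mu\lambda\,b^{2}(\zeta(\mu)),\qquad
\Delta(\mu,\lambda;\zeta(\lambda))=-\mu\lambda\,b^{2}(\zeta(\lambda)),
\end{equation*}
which, since $b>0$ on $(-\infty,0)\cup(2,+\infty)$ (the representation in the proof of Lemma \ref{asympt(abc)}), both carry the sign $-\sign(\mu\lambda)$. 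Hence in the cases with $\mu\lambda>0$ (cases (i) and (v)) the function $\Delta$ is negative at both $\zeta_{\min}:=\min\{\zeta(\mu),\zeta(\lambda)\}$ and $\zeta_{\max}:=\max\{\zeta(\mu),\zeta(\lambda)\}$; combined with $\Delta(2^{+})=+\infty$, $\Delta(+\infty)=1$ this produces, by the intermediate value theorem, one zero in $(2,\zeta_{\min})$ and one in $(\zeta_{\max},+\infty)$ (case (i)), and $\zeta_{1}<\zeta_{\min}\le\zeta_{\max}<\zeta_{2}<0$ in case (v); while for $\mu\lambda<0$ (cases (ii)--(iv)) positivity of $\Delta$ at $\zeta(\mu)$ resp.\ $\zeta(\lambda)$ traps the unique zero strictly between the corresponding edge of $[0,2]$ and $\zeta(\mu)$ resp.\ $\zeta(\lambda)$.

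To upgrade ``at least'' to ``exactly'' I need an a priori bound on the number of zeros of $\Delta(\mu,\lambda;\cdot)$ per interval. When $\mu\lambda>0$ I would write $\Delta(\mu,\lambda;z)=\det(I-M(z))$ with the symmetrised Birman--Schwinger matrix
\begin{equation*}
M(z)=\begin{pmatrix}\mu\,a(z)&\sqrt{\mu\lambda}\,\tilde b(z)\\ \sqrt{\mu\lambda}\,\tilde b(z)&\lambda\,c(z)\end{pmatrix},\qquad \tilde b:=-b,
\end{equation*}
and observe that $-M'(z)=\int_{\mathbb{T}}(z-\varepsilon(q))^{-2}\,v(q)v(q)^{\top}\,d\nu(q)\succeq0$ on each interval, with $v(q)=(\sqrt{\mu},\sqrt{\lambda}\cos q)^{\top}$; hence the ordered eigenvalues $m_{1}(z)\le m_{2}(z)$ of $M(z)$ are monotone in $z$, each factor of $\Delta=(1-m_{1})(1-m_{2})$ has at most one zero, so $\Delta$ has at most two zeros on each interval, and the exact count is read off from $m_{2}(2^{+})=+\infty$ and the finite limit $m_{1}(2^{+})=\pm\mu\lambda/(\mu+\lambda)$ (and the analogous quantities at $0^{-}$), whose position relative to $1$ is equivalent, via $C^{\pm}_{-1/2}$ and the sign of $\mu+\lambda$, to membership in the relevant $\mathbb{G}_{k,\pm}$. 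When $\mu\lambda<0$ the symmetrisation is unavailable, and I would instead use the closed forms $a(z)=\pm(z(z-2))^{-1/2}$ (from $\int_{\mathbb{T}}(z-1+\cos q)^{-1}d\nu=(z(z-2))^{-1/2}$), $b(z)=(z-1)a(z)-1$, $c(z)=-(z-1)+(z-1)^{2}a(z)$, which turn $\Delta(\mu,\lambda;z)$ into $P(z)+a(z)Q(z)$ with $P$ affine and $Q$ quadratic; then its zeros are among the roots of the cubic $z(z-2)P(z)^{2}-Q(z)^{2}$ (the quartic terms cancel), so $H_{\mu\lambda}$ has at most three eigenvalues outside $[0,2]$, and the branch condition $\sign P=-\sign Q$ on $(2,+\infty)$, $\sign P=\sign Q$ on $(-\infty,0)$, together with the sign data of the first paragraph, cuts this down to the exact count in each case.

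I expect the zero-counting to be the main obstacle: the intermediate-value arguments only yield existence, and excluding extra pairs of zeros between consecutive control points is exactly what the monotone-family argument (for $\mu\lambda>0$) and the degree-three algebraic reduction (for $\mu\lambda<0$) are for. A second, smaller point is the borderline loci $C^{\pm}_{-1/2}=0$, where one must check that the finite limit $C^{\pm}_{0}$ — which equals $1-\mu$, resp.\ $1+\mu$, on that locus — has the sign predicted by the classification; this is why the constraints ``$\mu>1$'', ``$\mu<-1$'', ``$\mu<1$'', ``$\mu>-1$'' appear in the definitions of the $\mathbb{G}_{k,\pm}$. With all of this in hand the five cases are settled one at a time by combining the three pieces of boundary/interior sign data with the $\le2$ bound.
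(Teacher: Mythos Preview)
Your existence skeleton matches the paper's: reduce to zeros of $\Delta(\mu,\lambda;\cdot)$ via Lemma~\ref{eig-zero}, use the limit $1$ at $\pm\infty$ and the edge asymptotics of Lemma~\ref{asympDELTA}, and evaluate $\Delta$ at the rank-one zeros $\zeta(\mu),\zeta(\lambda)$ by substituting into \eqref{determinant} to get $\Delta(\mu,\lambda;\zeta(\mu))=-\mu\lambda b^{2}(\zeta(\mu))$. This is exactly what the paper does in case~(i).

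Where you diverge is in the exact-count arguments. The paper does \emph{not} use a Birman--Schwinger eigenvalue-monotonicity argument or the algebraic cubic reduction. Instead, for case~(i) it shows $\Delta>0$ on $(-\infty,0)$ directly by the Cauchy--Schwarz inequality applied to $(\varepsilon(q)-z)^{-1/2}$ and $\cos q\,(\varepsilon(q)-z)^{-1/2}$ (this is the inequality $b^{2}\le ac$, which your $M(z)\preceq0$ on $(-\infty,0)$ encodes in disguise), and on $(\zeta_{\max},+\infty)$ it differentiates $\Delta$ explicitly,
\[
\partial_{z}\Delta(\mu,\lambda;z)=-\mu\,\Delta(0,\lambda;z)\,a'(z)-\lambda\,\Delta(\mu,0;z)\,c'(z)-2\mu\lambda\,b(z)b'(z),
\]
and checks each term is positive there. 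For case~(ii) the paper uses a \emph{continuation argument}: at $(\mu,0)\in\mathrm{G}_{01}$ there is exactly one zero, and one connects $(\mu,0)$ to a general $(\mu,\lambda)\in\mathrm{G}_{01}$ by a compact arc along which the zero count cannot change. Your Birman--Schwinger monotone-family argument is more systematic and arguably cleaner; the paper's derivative computation is more hands-on, and its continuation argument for~(ii) is terse.

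One small slip in your organisation: you assert that cases~(ii)--(iv) have $\mu\lambda<0$. This is true for~(iii), but $\mathrm{G}_{01}$ and $\mathrm{G}_{10}$ contain points with $\mu\lambda>0$ (e.g.\ $(\mu,\lambda)=(2,\tfrac{1}{10})$ satisfies $-(\mu+\lambda)<\mu\lambda<\mu+\lambda$, hence lies in $\mathbb{G}_{0,-}\cap\mathbb{G}_{1,+}=\mathrm{G}_{01}$). On that part of $\mathrm{G}_{01}$ your ``positivity of $\Delta$ at $\zeta(\mu)$'' claim fails, since $\Delta(\mu,\lambda;\zeta(\mu))=-\mu\lambda b^{2}<0$. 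Your Birman--Schwinger tool still covers it: with $\mu,\lambda>0$ the smaller eigenvalue $m_{1}(z)$ is monotone and $1-m_{1}(2^{+})=-C^{+}_{-1/2}/(C_{1}(\mu+\lambda))>0$ on $\mathbb{G}_{1,+}$, so $m_{1}$ never crosses $1$ and only $m_{2}$ contributes a zero. So the fix is just to reroute those sub-regions through the $\mu\lambda>0$ branch of your argument rather than the cubic one.
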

\begin{remark}
The sets $\mathrm{G}_{02}, \mathrm{G}_{01}, \mathrm{G}_{11},
\mathrm{G}_{10}\,\, \mbox{and}\,\, \mathrm{G}_{20}$ which appears in
Theorem \ref{main} are shown in the figure $1$.
\end{remark}

\begin{remark}
We remark that if $\varepsilon(\cdot)$ is arbitrary real-analytic
function on $\T^d$ and has a unique non-degenerate minimum and
maximum, then Theorem \ref{main} holds.
\end{remark}

\begin{proof}
\begin{itemize}
\item[(\rm{i})]
Assume $(\mu,\lambda)\in (\mu,\lambda)\in \mathrm{G}_{02}$ and
$z<0$. Then an application the Cauchy--Schwarz inequality for the
functions $[\varepsilon(q)-z]^{-\frac{1}{2}}$ and $\cos q\,
[\varepsilon(q)-z]^{-\frac{1}{2}}$ yields the inequality
\begin{align*}\label{determinant}
&\Delta(\mu,\lambda;z)=\big(1+\mu\int \limits_{\mathbb{T}}\frac{d\nu}{\varepsilon
(q)-z}\big)+\big(1+\lambda\int\limits_ {\mathbb{T}}\frac{\cos^2q d\nu}{\varepsilon
(q)-z}\big)\\
&+\mu\lambda\big[\int \limits_{\mathbb{T}}\frac{d\nu}{\varepsilon
(q)-z}\int \limits_{\mathbb{T}}\frac{\cos^2qd\nu}{\varepsilon
(q)-z}-\big(\int\limits_{\mathbb{T}}\frac{\cos q d\nu}{\varepsilon(q) -z}\big)^2\big]>0,
\end{align*}
i.e., $\Delta(\mu,\lambda;z)$ has  no zero in the interval $(-\infty,0)$.
 Lemma \ref{eig-zero} gives that the operator $H_{\mu\lambda}$ has no
 eigenvalue below the bottom of the essential spectrum.

Let $(\mu,\lambda)\in \mathrm{G}_{02}$ and $z>2$.

Since $\mu,\lambda>0$ the function
$\Delta(\mu,0;\cdot)$\quad\mbox{resp.}\quad
$\Delta(0,\lambda;\cdot)$ is monotone increasing in $(1,+\infty).$
Applying Lemma \ref{asyminfty} we have
$$\lim\limits_{z\to +\infty}\Delta(\mu,0;z)=1
\,\,\mbox{resp.}\,\,\lim\limits_{z\to +\infty}\Delta(0,\lambda;z)=1.$$
Corollary \ref{asymDelta(1+)} gives that $$\lim\limits_{z\rightarrow 1+}\Delta(\mu,0;z)=-\infty,\,\,\mbox{resp.}\,\,\
\lim\limits_{z\rightarrow 1+}\Delta(0,\lambda;z)=-\infty.$$

The continuous function $\Delta(\mu,0;\cdot)$\quad\mbox{and}\quad
$\Delta(0,\lambda;\cdot)$  has a zero $\zeta(\mu)$\, resp.
\,$\zeta(\lambda)$ in the interval $(1,+\infty)$. The representation
\eqref{determinant} of the determinant $\Delta(\mu,\lambda\,;z)$
gives the inequality $\Delta(\mu,\lambda\,;\zeta(\mu))<0$ resp.
$\Delta(\mu,\lambda\,;\zeta(\lambda))<0$. Denote by
\begin{align*}
&\zeta_{\min}(\mu,\lambda)=\min\{\zeta(\mu),\zeta(\lambda)\}\\
&\zeta_{\max}(\mu,\lambda)=\max\{\zeta(\mu),\zeta(\lambda)\}.
\end{align*}

The representation \eqref{determinant} of  determinant $\Delta(\mu,\lambda;z)$ gives the inequality
$\Delta(\mu,\lambda;\zeta_{\min}(\mu,\lambda))<0$.
Corollary \ref{asympDELTA} yields
$$\lim\limits_{z\to 1+}\Delta(\mu,\lambda;z)=+\infty$$
Hence  there exist a number
$z_1(\mu,\lambda)\in (1,\zeta_{\min}(\mu,\lambda))$ such that
\begin{equation*}
\Delta(\mu,\lambda;z_1(\mu,\lambda;0))=0.
\end{equation*}
Lemma \ref{eig-zero} gives the existence of the eigenvalue of the operator
in the interval $(1,\zeta_{\min}(\mu,\lambda))$.

The monotonicity of function $\Delta(\mu,0;z)$ resp. $\Delta(\lambda,0;z)$
 gives for $z>\zeta(\mu)$ resp. $z>\zeta(\lambda)$  the relation
$$\Delta(\mu,0;z)>\Delta(\mu,0;\zeta(\mu))=0,\,\, \mbox{resp.}\,\,
\Delta(\lambda,0;z)>\Delta(\lambda;\zeta(\lambda))=0.
$$
Applying Lemma \ref{asympt(abc)} we have in the interval $(2,+\infty)$ the inequality
\begin{equation*}
\frac{\partial \Delta(\mu,\lambda;z)}{\partial z}=-\mu
\Delta(0,\lambda;z)a'(z)-\lambda \Delta(\mu,0,;z) c'(z)- 4\mu\lambda
b(z)b'(z)>0,
\end{equation*}
i.e., the function $\Delta(\mu,\lambda;\cdot)$ is monotone
increasing in the interval $(\zeta_{\max}(\mu,\lambda),+\infty)$.
Lemma \ref{asyminfty}, i.e., the relation
\begin{equation*}
\lim\limits_{z\to +\infty} \Delta(\mu,\lambda;z)=1,
\end{equation*}
yields  the existence a unique number $z_2(\mu,\lambda)\in
(\zeta_{\max}(\mu,\lambda),+\infty $ such that
\begin{equation*}
\Delta(\mu,\lambda;z_2(\mu,\lambda;0))=0.
\end{equation*}
Lemma \ref{eig-zero} gives that the operator has two eigenvalues above the top of the essential spectrum.
These eigenvalues obeys the relations \eqref{main1}.

\item[(\rm{ii})] Assume $(\mu,\lambda)\in
\mathrm{G}_{01}$ and $z<0$.

As in the case $(\rm{i})$ we can show that operator $H_{\mu\lambda}$
has no eigenvalue below the essential spectrum.

It is easy to  show that for any $\mu>0$ the operator $H_{\mu0}$ has
only one eigenvalue at the point $(\mu,0)\in \mathrm{G}_{02}$.

Indeed. Lemma \ref{asyminfty} and Corollary \ref{asymDELTA(1)} give
that
$$\lim\limits_{z\to +\infty }\Delta(\mu,0;z)=1$$ and
$$\lim\limits_{z\to 2+}\Delta(\mu,0;z)<0.$$
Hence, the continuous function $\Delta(\mu,0;\cdot)$ in $z\in
(2,+\infty)$ has a unique zero $\zeta_1(\mu,0)\in (2,+\infty).$

Let $(\mu,\lambda)\in \mathrm{G}_{01}$ be an other point, then there
is a line(curve) $\Gamma [(\mu,0),(\mu,\lambda)]\in
\mathrm{G}_{01}$, which connects the points $(\mu,0)$ and
$(\mu,\lambda)$. The compactness of $\Gamma
[(\mu,0),(\mu,\lambda)]\in \mathrm{G}_{01}$ yields that  at the
point $(\mu,\lambda)\in \mathrm{G}_{01}$ the function
$\Delta(\mu,\lambda;z)$ has only one zero. Thus, Lemma
\ref{eig-zero} yields that the operator has only one eigenvalue
above the top of the essential spectrum.

\item[(\rm{iii})] Assume $(\mu,\lambda)\in \mathrm{G}_{11}$.

In this case applying Lemma \ref{asyminfty} and Corollary
\ref{asymDELTA(1)} give
$$\lim\limits_{z\to 2+}\Delta(\mu,\lambda;z)=-\infty,$$
$$\lim\limits_{z\to 0-}\Delta(\mu,\lambda;z)=-\infty,$$
and
$$\lim\limits_{z\to \pm\infty}\Delta(\mu,\lambda;z)=1.$$
Hence, the continuous function $\Delta(\mu,\lambda;\cdot)$ in $z\in
(-\infty,0)\cup (2,+\infty)$ has two zeros $\zeta_1(\mu,\lambda)\in
(-\infty,0)$ and $\zeta_2(\mu,\lambda)\in (2,+\infty)$.

Thus, Lemma \ref{eig-zero} yields that the operator has two
eigenvalues: one of them lies below the bottom of the essential
spectrum and other one lies above its top.

The other cases $(iv)$ and $(v)$ of Theorem \ref{main}  can be proven by
the same way as the cases  $(i)$ and $(ii)$.
\end{itemize}
\end{proof}
\section{Acknowledgment}
The work was supported by the Fundamental Science Foundation of
Uzbekistan.

\end{document}